\DeclareMathOperator*{\esssup}{ess\,sup}
\DeclareMathOperator*{\essinf}{ess\,inf}
\newtheorem{theorem}{Theorem}[section]
\newtheorem{remark}{Remark}[section]
\title{Traces for
fractional Sobolev spaces with variable exponents
}
\author[L. M. Del Pezzo]{Leandro M. Del Pezzo}
	\address{Leandro M. Del Pezzo \hfill\break\indent
		CONICET and 
		\hfill\break\indent
		Departamento de Matem\'aticas y 
		Estad\'istica
		\hfill\break\indent Universidad Torcuato Di Tella
		\hfill\break\indent Av. Figueroa Alcorta 7350 (C1428BCW)
		\hfill\break\indent Buenos Aires, Argentina. }
	\email{ldelpezzo@utdt.edu}
	\urladdr{http://cms.dm.uba.ar/Members/ldpezzo/}
\author[J. D. Rossi]{Julio D. Rossi}
    \address{Julio D. Rossi\hfill\break\indent
        CONICET and 
        \hfill\break\indent
        Departamento  de Matem{\'a}tica, FCEyN,
        \hfill\break\indent
        Universidad de Buenos Aires,
        \hfill\break\indent Pabellon I, Ciudad Universitaria (1428),
        \hfill\break\indent
        Buenos Aires, Argentina.}
\email{jrossi@dm.uba.ar}
\urladdr{http://mate.dm.uba.ar/$\sim$jrossi/}
\keywords{$p-$Laplacian, fractional operators, variable exponents. \\
\indent AMS-Subj Class: 	46E35, 45G10, 45P05, }
\begin{document}

\begin{abstract}
    In this note we prove a trace theorem in fractional spaces with 
    variable exponents. To be more precise, we show that if 
    $p\colon\overline{\Omega }\times \overline{\Omega }
    \rightarrow (1,\infty )$ and $q\colon\partial \Omega 
    \rightarrow (1,\infty )$ are continuous functions such 
    that
    \[
        \frac{(n-1)p(x,x)}{n-sp(x,x)}>q(x)
        \qquad \mbox{ in } \partial \Omega \cap 
        \{x\in\overline{\Omega}\colon n-sp(x,x) >0\},
    \]
   then the inequality
    $$
        \Vert f\Vert _{\scriptstyle  L^{q(\cdot)}(\partial \Omega )}
        \leq C \left\{ \Vert f\Vert _{\scriptstyle L^{\bar{p}(\cdot)}(\Omega )}+
        [f]_{s,p(\cdot,\cdot)} \right\}
    $$
    holds. 
    Here $\bar{p}(x)=p(x,x)$ and $\lbrack f\rbrack_{s,p(\cdot,\cdot)} $ denotes 
    the fractional seminorm with variable exponent,
    that is given by
    \begin{equation*}
        \lbrack f\rbrack_{s,p(\cdot,\cdot)} \coloneqq 
        \inf \left\{ \lambda >0\colon
        \int_{\Omega}\int_{\Omega }\frac{|f(x)-f(y)|^{p(x,y)}}{\lambda ^{p(x,y)}
        |x-y|^{n+sp(x,y)}}dxdy<1\right\}
    \end{equation*}
    and $\Vert f\Vert _{\scriptstyle L^{q(\cdot)}(\partial \Omega )}$ and 
    $\Vert f\Vert _{\scriptstyle L^{\bar{p}(\cdot)}(\Omega )}$ are the usual
    Lebesgue norms with variable exponent.
\end{abstract}

\maketitle

\section{Introduction}
    
    We begin this article remembering the definition of the variable exponents 
    Lebesgue space, to this end we follow \cite{diening}.
    Let $(A, \Sigma,\mu)$ be a $\sigma-$finite complete measure space. Then by $ \mathcal{M}(A,\mu)$
    we denote the space of all $f\colon A\to [-\infty,+\infty]$ $\mu-$measurable
    functions. We say that $p\in \mathcal{M}(A,\mu)$ is a bounded variable exponent if
    \[  
        1\le p_-\coloneqq\essinf_{x\in A} p(x)\le p_+\coloneqq\esssup_{x\in A} p(x)<+\infty.
    \] 
    Then the variable exponent Lebesgue space $L^{p(\cdot)}(A,\mu)$ is defined as
    \[
	    L^{p(\cdot)}(A,\mu)
	    \coloneqq\left\{
	    f\in\mathcal{M}(A,\mu)\colon \exists\lambda>0\text{ such that }
	    \int_{A} \left|\frac{f(x)}{\lambda} \right|^{r(x)}d\mu <+\infty
	    \right\}
    \]
    equipped with the norm 
    \[
        \Vert f\Vert_{\scriptstyle L^{p(\cdot)}(A,\mu)}\coloneqq\inf 
        \left\{ \lambda >0\colon\int_{A}
        \left(\frac{|f(x)|}{\lambda}\right)^{p(x)} d\mu(x)<1
        \right\}.
    \]
    In the special case that $\mu_1$ is the $n-$Lebsgue measure, 
    $\mu_2$ is the $(n-1)-$Huassdorff measure, $\Omega$ is a smooth bounded 
    domain of $\mathbb{R}^n,$  $\Sigma_1$  is the $\sigma-$algebra of
    $\mu_1-$measurable set of $\Omega,$  $\Sigma_2$  is the $\sigma-$algebra of
    $\mu_2-$measurable set of $\partial\Omega,$ $p\in\mathcal{M}(\Omega,\mu_1)$
    and $q\in\mathcal{M}(\Omega,\mu_2)$ are bounded variable exponents,  
    we note $L^{p(\cdot)}(\Omega)\coloneqq L^{p(\cdot)}(\Omega,\mu_1)$ and 
    $L^{q(\cdot)}(\partial\Omega)\coloneqq L^{q(\cdot)}(\partial\Omega,\mu_2).$

    From now on let $\Omega$ be a fixed smooth bounded 
    domain in $\mathbb{R}^n$.  Let $p$ be a bounded variable exponent in 
    $\overline{\Omega}\times \overline{\Omega}$, $\bar{p}(x)\coloneqq p(x,x)$
    and  $0<s<1.$  We now introduce the variable exponent 
    Sobolev fractional space as follows:
    \begin{align*}
	    \displaystyle 
	     &W^{s,p(\cdot,\cdot)}(\Omega )\coloneqq\\
	     &\left\{f\in L^{\bar{p}(\cdot)}(\Omega )\colon
        \displaystyle\int_{\Omega}\int_{\Omega }
        \frac{|f(x)-f(y)|^{p(x,y)}}{\lambda ^{p(x,y)}|x-y|^{n+sp(x,y)}}dxdy<+\infty 
        \text{ for some }\lambda >0
        \right\},
    \end{align*}
    and we set 
    \begin{equation*}
        \lbrack f\rbrack_{s,p(\cdot,\cdot)}(\Omega)\coloneqq\inf 
        \left\{ \lambda >0\colon\int_{\Omega}\int_{\Omega }
        \frac{|f(x)-f(y)|^{p(x,y)}}{\lambda ^{p(x,y)}|x-y|^{n+sp(x,y)}}dxdy<1
        \right\}
    \end{equation*}
    as the variable exponent seminorm. When there is
    no confusion we omit the set $\Omega$ from the notation.
    
    It is easy to see that $W^{s,p(\cdot,\cdot)}(\Omega )$ is a Banach space with the norm 
    \[
	    \Vert f\Vert _{s,p(\cdot,\cdot)}\coloneqq
	    \Vert f\Vert _{\scriptstyle L^{\bar p(\cdot)}(\Omega )}
	    +\lbrack f\rbrack_{s,p(\cdot,\cdot)}.   
     \]
   
    To show this fact, one just has to follow the arguments in \cite{Hich} for 
    the constant exponent case. This space $W^{s,p(\cdot,\cdot)}(\Omega )$ was 
    recently introduced in \cite{KRV}. 
    For general theory of classical Sobolev spaces we
    refer the reader to \cite{AF,DD} and for the variable exponent case to \cite{diening}.
    From an applied point of view we recall that non-local energies 
    with constant exponents (we quote here \cite{osher1,osher2}) and 
    also local equations with variable exponents (see \cite{osher3}) where used in image 
    processing. The space $W^{s,p(\cdot,\cdot)}(\Omega )$ 
    defined above combines the two features, it is given by a fractional seminorm 
    with a variable exponent.

    Now we consider two continuous
    variable exponents, one defined in $\overline{\Omega} \times \overline{\Omega}$
    (that was used to define the previous space $W^{s,p(\cdot,\cdot)}(\Omega )$)
    and the other on $\partial \Omega$ (that is used for the usual Lebesgue space
    $L^{\scriptstyle q(\cdot)}(\partial \Omega )$). We assume that
    both $p$ and $q$ are bounded away from $1$ and $\infty,$ that is,
   \[
        1<p_{-}\leq p_{+}< +\infty
        \quad\text{and}\quad  1<q_{-}\leq q_{+}< +\infty  . 
    \]

    Our main result in this note is the following compact embedding trace theorem 
    into variable exponent Lebesgue spaces.

    \begin{theorem}\label{sobolev.trace}  
        If $1<sp_- $ and
        \begin{equation} \label{cota.q}
            p^\star(x)\coloneqq\frac{(n-1)\bar{p}(x)}{n-s\bar{p}(x)}>q(x) \quad \text{ in }
            \partial \Omega \cap \{x\in\overline{\Omega}\colon n-s\bar{p}(x) >0\},
        \end{equation}
        then there is a constant 
        $C=C(n,s,p,q,\Omega )$ such that  
        \begin{equation*}
                \Vert f\Vert _{L^{\scriptstyle q(\cdot)}(\partial \Omega )}\leq 
                C\Vert f\Vert _{s,p(\cdot,\cdot)}
                \quad \forall f\in W^{s,p(\cdot,\cdot)}(\Omega ).
        \end{equation*}
        That is, the space $W^{s,p(\cdot,\cdot)}(\Omega )$ is continuously embedded in 
        $L^{q(\cdot)}(\partial \Omega ).$ Moreover, this embedding is
        compact.
    \end{theorem}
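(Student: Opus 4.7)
The plan is to reduce the problem to the classical constant-exponent fractional trace theorem: cover $\partial\Omega$ by finitely many small neighborhoods, on each of which the continuity of $p$ and $q$ together with the strict inequality \eqref{cota.q} allows replacing the variable exponents by constants that still satisfy a strict subcritical relation; apply the constant-exponent trace locally, transfer it to the variable-exponent setting via modular comparisons, and glue via a partition of unity.

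For each $x_0 \in \partial\Omega$, continuity lets me pick a small (relatively open) neighborhood $U_{x_0}\subset\overline{\Omega}$ and constants $p_0,q_0$ with
\[
p_0 \leq \inf_{\overline{U_{x_0}}\times\overline{U_{x_0}}} p,\qquad q_0 \geq \sup_{\overline{U_{x_0}}\cap\partial\Omega} q,\qquad \frac{(n-1)p_0}{n-sp_0} > q_0,
\]
the last inequality holding thanks to \eqref{cota.q} once $U_{x_0}$ is small enough (when $n-s\bar p(x_0)\leq 0$, any finite $q_0$ works). A smooth chart flattens $\partial\Omega\cap U_{x_0}$, after which the classical fractional trace theorem supplies the continuous (and, because the inequality is strict, compact) embedding $W^{s,p_0}(U_{x_0})\hookrightarrow L^{q_0}(\partial\Omega\cap U_{x_0})$.

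The delicate step is then converting the constant-exponent inequality into a variable-exponent one. After normalising $f$ by $\lambda=\max\{1,\|f\|_{s,p(\cdot,\cdot)}\}$ so that the variable-exponent modular is at most $1$, I would compare the integrands
\[
\frac{|f(x)-f(y)|^{p_0}}{|x-y|^{n+sp_0}}\quad\text{and}\quad\frac{|f(x)-f(y)|^{p(x,y)}}{|x-y|^{n+sp(x,y)}}
\]
by splitting the domain into the regions where $|f(x)-f(y)|\geq |x-y|^s$ and where $|f(x)-f(y)|\leq |x-y|^s$. On the first region the variable-exponent modular directly dominates its constant-exponent counterpart because $p_0\leq p(x,y)$ and $|x-y|^{s(p(x,y)-p_0)}\leq 1$ on the bounded patch. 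On the second (H\"older-good) region one uses the slack gained by taking $p_0$ strictly smaller than $\bar p$ on $U_{x_0}$ together with a fractional-Hardy type bound to absorb the remaining piece into $\|f\|_{L^{\bar p(\cdot)}(U_{x_0})}$. Standard modular comparisons on sets of finite measure handle $L^{\bar p(\cdot)}$ versus $L^{p_0}$ and $L^{q(\cdot)}$ versus $L^{q_0}$, and the unit-ball property of Luxemburg norms converts the modular bounds into norm bounds. Summing over a finite subcover via a subordinate partition of unity then produces the global inequality $\|f\|_{L^{q(\cdot)}(\partial\Omega)}\leq C\|f\|_{s,p(\cdot,\cdot)}$.

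For compactness, apply the same local reduction to a bounded sequence $\{f_k\}\subset W^{s,p(\cdot,\cdot)}(\Omega)$: on each patch $\{f_k\}$ is bounded in $W^{s,p_0}(U_{x_0})$ and the classical trace is compact thanks to the strict inequality $(n-1)p_0/(n-sp_0)>q_0$, so a subsequence converges in $L^{q_0}(\partial\Omega\cap U_{x_0})$ and hence in $L^{q(\cdot)}(\partial\Omega\cap U_{x_0})$ (via the embedding $L^{q_0}\hookrightarrow L^{q(\cdot)}$ on sets of finite measure). A finite diagonal extraction yields convergence in $L^{q(\cdot)}(\partial\Omega)$. The main obstacle is precisely the modular comparison on the H\"older-good set $\{|f(x)-f(y)|\leq |x-y|^s\}$: the naive pointwise bound fails because $|x-y|^{-n-sp_0}$ is not integrable on the patch, so one must genuinely exploit the strict inequality in \eqref{cota.q} to buy the extra integrability needed to close the argument.
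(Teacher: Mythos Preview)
Your overall strategy (localise, freeze to constant exponents below $p$ and above $q$, apply the classical trace, then glue) is exactly the paper's. The gap is the step you yourself flag as ``delicate'': comparing $[f]_{s,p_0}(U_{x_0})$ with $[f]_{s,p(\cdot,\cdot)}(U_{x_0})$. Your proposed splitting into $\{|f(x)-f(y)|\ge|x-y|^s\}$ and its complement handles the first region, but on the second the pointwise bound only gives $|x-y|^{-n}$, which is still non-integrable; invoking a ``fractional-Hardy type bound'' or the strict inequality in \eqref{cota.q} does not repair this, because \eqref{cota.q} concerns the relation between $q$ and $p^\star$ on $\partial\Omega$ and says nothing about the near-diagonal integrability of $|x-y|^{-n-sp_0}$ in the interior.

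The paper's fix is simple and you are missing it: do \emph{not} keep the same fractional index $s$ when you pass to constant exponents. Instead, for each patch choose $t_i<s$ (and $p_i$ slightly below $\inf p$ on the patch) so that one still has $1<t_ip_i<n$ and $(n-1)p_i/(n-t_ip_i)>q(x)$ there; this is possible precisely because of the strict inequality \eqref{cota.q}. The comparison $[f]_{t_i,p_i}(B_i)\le C\,[f]_{s,p(\cdot,\cdot)}(B_i)$ is then immediate: write $[f]_{t_i,p_i}=\|F\|_{L^{p_i}(B_i\times B_i,\mu)}$ with $F(x,y)=|f(x)-f(y)|/|x-y|^s$ and $d\mu=|x-y|^{-n+(s-t_i)p_i}\,dx\,dy$, apply H\"older in $L^{p(\cdot,\cdot)}(\mu)$, and use that on a patch of diameter $<1$ one has $|x-y|^{(s-t_i)p_i}\le 1$, so $\|F\|_{L^{p(\cdot,\cdot)}(\mu)}\le[f]_{s,p(\cdot,\cdot)}$. (This is the content of the paper's Theorem~\ref{theorm:embed1}.) With this one change your outline becomes a complete proof; no partition of unity or case-splitting on $|f(x)-f(y)|$ is needed.
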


    \begin{remark}
        {\rm Observe that if $p$ is a continuous bounded variable exponent in $\overline{%
        \Omega }$ and we extend $p$ to $\overline{\Omega }\times \overline{\Omega }$
        as $p(x,y)\coloneqq\frac{p(x)+p(y)}{2}$, then $p^{\ast } (x)$ 
        coincides with the classical Sobolev trace exponent associated with $p(x)$.}
    \end{remark}
    
    \begin{remark} {\rm
	    We also want to observe that Theorem \ref{sobolev.trace} is still holds 
	    if we replace the continuity hypotheses with the assumption that there is
	    $\varepsilon>0$ such that 
	    \[
	           p^\star(x)-\varepsilon>q(x) \quad \text{ in }
            \partial \Omega \cap \{x\in\overline{\Omega}\colon n-s\bar{p}(x) >0\}.
	    \]
	    }
    \end{remark}
    As a simple application of our trace theorem we can mention the following:
    For the local case we have that the Neumann problem
    $$
    \left\{
        \begin{array}{ll}
            -\Delta_{p(x)} u (x) + |u|^{p(x)-2} u (x)  =0  
            \qquad & \mbox{ in } \Omega, \\[10pt]
            \displaystyle |\nabla u(x) |^{p(x)-2} \frac{\partial u }{\partial \eta}(x)  
            = g (x) \qquad & \mbox{ on } \partial \Omega, 
        \end{array}
        \right.
    $$
    can be solved minimizing the functional
    $$
        F (u) \coloneqq
        q \int_\Omega \frac{|\nabla u(x) |^{p(x)}}{p(x)}\, dx + \int_\Omega 
        \frac{|u (x) |^{p(x)}}{p(x)} \, dx -  \int_{\partial \Omega } g (x)u (x)
        \, d\sigma.
     $$ 
    Here $\Delta_{p(x)}u=\mbox{div}\left(|\nabla u|^{p(x)-2}\nabla u\right)$ is the
    $p(x)-$Laplacian and $\dfrac{\partial}{\partial\eta}$ is the outer normal derivative.

    Here we show the following result that is analogous to the one that holds for the local 
    case.

    \begin{theorem}\label{teo.aplic}  
        Let $r\colon\partial \Omega \to(1,\infty )$   be a continuous function such that
        $1<r_{-}\leq r_{+}< +\infty.$  
        If $p$ is symmetric (i.e. $p(x,y)=p(y,x)$) and 
        \[ 
            p^\star(x)>\frac{r(x)}{r(x)-1}
            \quad \text{ in }
                \partial \Omega \cap \{x\in\overline{\Omega}\colon n-sp(x,x) >0\}, 
        \] 
        then, for any  $g\in L^{r(\cdot)}(\partial \Omega )$,
        there exists a unique minimizer of the functional
        $$
            G(u)\coloneqq \int_{\Omega}\int_{\Omega }
            \frac{|u(x)-u(y)|^{p(x,y)}}{p(x,y) |x-y|^{n+sp(x,y)}} \,  dx \, dy  
            + \int_{\Omega } \frac{|u (x)|^{p(x,x)}}{p(x,x)} \, dx -  
            \int_{\partial \Omega } g (x)u (x)\, d\sigma
        $$
        in $W^{s,p(\cdot,\cdot)}(\Omega)$ that verifies
        $$
        \begin{array}{l}
                \displaystyle
                \int_{\Omega}\int_{\Omega }\frac{|u(x)-u(y)|^{p(x,y)-2} (u(x)-u(y)) 
            (\varphi (x) -\varphi(y))}{|x-y|^{n+sp(x,y)}} \,  dx \, dy  \\[10pt]
            \qquad \displaystyle + \int_{\Omega } |u (x)|^{p(x,x)-2} u(x) \varphi (x) \, 
            dx -  \int_{\partial \Omega } g (x)\varphi (x)\, d\sigma =0 
        \end{array}
        $$
        for every $\varphi \in C^{1}$.
    \end{theorem}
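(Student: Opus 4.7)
I would prove Theorem \ref{teo.aplic} by the direct method of the calculus of variations, using Theorem \ref{sobolev.trace} precisely to control the boundary term. Let me outline the four steps.

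\textbf{Step 1: Coercivity.} Because $p^\star(x)>r'(x)=\frac{r(x)}{r(x)-1}$ on the relevant set and $r$ is continuous with $r_->1$, I can choose a continuous exponent $q\colon \partial\Omega\to(1,\infty)$ with $r'(x)\le q(x)<p^\star(x)$, so that Theorem \ref{sobolev.trace} gives $W^{s,p(\cdot,\cdot)}(\Omega)\hookrightarrow L^{q(\cdot)}(\partial\Omega)\hookrightarrow L^{r'(\cdot)}(\partial\Omega)$. Hölder's inequality in variable Lebesgue spaces then yields
\[
   \left|\int_{\partial\Omega} g(x)u(x)\,d\sigma\right|\le 2\|g\|_{L^{r(\cdot)}(\partial\Omega)}\|u\|_{L^{r'(\cdot)}(\partial\Omega)}\le C\|g\|_{L^{r(\cdot)}(\partial\Omega)}\,\|u\|_{s,p(\cdot,\cdot)}.
\]
For the remaining two terms I use the usual norm–modular comparison for variable exponents: if $\|u\|_{L^{\bar p(\cdot)}(\Omega)}>1$ then $\int_\Omega |u|^{\bar p(x)}/\bar p(x)\,dx\ge \|u\|_{L^{\bar p(\cdot)}(\Omega)}^{p_-}/p_+$, and similarly for the fractional modular versus $[u]_{s,p(\cdot,\cdot)}$. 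Summing, $G(u)\ge c\,\|u\|_{s,p(\cdot,\cdot)}^{p_-}-C\|g\|_{L^{r(\cdot)}}\|u\|_{s,p(\cdot,\cdot)}-C'$, which is coercive since $p_->1$.

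\textbf{Step 2: Weak lower semicontinuity.} The first two terms of $G$ are integrals of convex, continuous functions of $u$ (the maps $t\mapsto |t|^{p(x,y)}/p(x,y)$ and $t\mapsto |t|^{\bar p(x)}/\bar p(x)$ are strictly convex because $p_->1$), hence each is convex and strongly continuous on $W^{s,p(\cdot,\cdot)}(\Omega)$, and therefore weakly lower semicontinuous. The boundary term is linear; by the compact trace embedding in Theorem \ref{sobolev.trace}, if $u_k\rightharpoonup u$ in $W^{s,p(\cdot,\cdot)}(\Omega)$ then $u_k\to u$ strongly in $L^{q(\cdot)}(\partial\Omega)$, and in particular in $L^{r'(\cdot)}(\partial\Omega)$, so $\int_{\partial\Omega}g u_k\,d\sigma\to \int_{\partial\Omega}g u\,d\sigma$.

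\textbf{Step 3: Existence and uniqueness.} Take a minimizing sequence $\{u_k\}$; by coercivity it is bounded in the reflexive Banach space $W^{s,p(\cdot,\cdot)}(\Omega)$ (reflexivity follows as in the constant-exponent case of \cite{Hich}, via the standard duality for modular spaces in \cite{diening}), so up to subsequences $u_k\rightharpoonup u$. By Step 2, $G(u)\le\liminf G(u_k)=\inf G$, so $u$ is a minimizer. Uniqueness is immediate from strict convexity: the first two terms of $G$ are strictly convex because $t\mapsto |t|^{p(x,y)}$ and $t\mapsto |t|^{\bar p(x)}$ are strictly convex a.e.\ (here the symmetry $p(x,y)=p(y,x)$ guarantees strict convexity of the first term in $u$, not merely in the pair $(u(x),u(y))$).

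\textbf{Step 4: Euler--Lagrange equation.} For $\varphi\in C^1$, differentiability of $G(u+t\varphi)$ at $t=0$ is justified by dominated convergence, using $\big||a+tb|^{p}-|a|^{p}\big|\le p\,(|a|+|b|)^{p-1}|b|\,|t|$ and the fact that the majorant $(|u(x)-u(y)|+|\varphi(x)-\varphi(y)|)^{p(x,y)-1}|\varphi(x)-\varphi(y)|/|x-y|^{n+sp(x,y)}$ is integrable (this follows from $\varphi\in C^1$ and $u\in W^{s,p(\cdot,\cdot)}(\Omega)$ via Hölder in variable exponents). The symmetry $p(x,y)=p(y,x)$ then turns $\frac{d}{dt}G(u+t\varphi)|_{t=0}=0$ into the stated weak formulation.

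The main obstacle is Step 1: carefully selecting the intermediate exponent $q$ so that both $p^\star>q$ on the critical set (for the trace theorem) and $q\ge r'$ everywhere on $\partial\Omega$ (for the subsequent embedding), which relies on the continuity of $p$, $q$ and $r$ together with compactness of $\partial\Omega$.
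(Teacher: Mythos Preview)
Your proof is correct and follows essentially the same route as the paper: direct method, coercivity via the trace theorem plus H\"older, strict convexity for uniqueness, and differentiation of $t\mapsto G(u+t\varphi)$ for the Euler--Lagrange equation. One small simplification: your ``main obstacle'' of choosing an intermediate exponent $q$ with $r'(x)\le q(x)<p^\star(x)$ is unnecessary, since the hypothesis already gives $p^\star(x)>r'(x)$ strictly on the critical set, so you may apply Theorem~\ref{sobolev.trace} directly with $q=r'$ (this is what the paper does).
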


    The rest of paper is organized as follows: in the next section, 
    Section \ref{sect-prelim}, we include as preliminaries the statements 
    of known results that will be used in the proof of our main result; while
    in Section \ref{sect-Sob-exp} we include the proof of Theorem \ref{sobolev.trace};
    Finally, in Section \ref{AnAp} we prove Theorem \ref{teo.aplic}.

\section{Preliminaries} \label{sect-prelim}

    In this section we collect some well known results. 
    
    We begin by observing that if  $(A, \Sigma,\mu)$ is a $\sigma-$finite complete space, 
    and $p$ is bounded variable exponent then $f\in L^{p(\cdot)}(A,\mu)$ if only if 
    \[
        \int_A |f(x)|^{p(x)} d\mu(x)<\infty.
    \] 
    
    Our first result in this section is the well known
    Holder's inequality for variable exponents, see \cite[Lemma 3.2.20]{diening}.

    \begin{theorem}[Holder's inequality]
        Let $(A, \Sigma,\mu)$ be a $\sigma-$finite complete space, and $p,q$ and $r$
        be bounded variable exponent such that 
        \label{Holder} 
        \[
            \frac{1}{r(x)}=\frac{1}{p(x)}+\frac{1}{q(x)}
         \]   
         for $\mu-$a.e. $x\in A.$ 
         If $f\in L^{p(\cdot)}(A,\mu)$ and $g\in L^{q(\cdot)}(A,\mu)$, then 
         $fg\in L^{r(\cdot)}(A,\mu)$
         and there is a positive constant $C$ such that
        \begin{equation*}
               \Vert fg\Vert _{\scriptstyle L^{r(\cdot)}(A)}\leq C
                \Vert f\Vert _{\scriptstyle L^{p(\cdot)}(A)}
                \Vert g\Vert_{\scriptstyle L^{q(\cdot)}(A)}.
        \end{equation*}
    \end{theorem}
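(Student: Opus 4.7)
My plan is to deduce the variable-exponent Holder inequality from a pointwise Young-type inequality, then integrate and exploit the Luxemburg-norm definition to read off the constant. The overall strategy is the classical one adapted to variable exponents: normalize $f$ and $g$ by their respective norms, use a pointwise inequality to split $|fg|^{r(x)}$ into contributions controlled by the modulars of $|f|^{p(x)}$ and $|g|^{q(x)}$, and then convert the resulting modular bound into the required norm bound.

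The first step is to observe that the relation $1/r(x)=1/p(x)+1/q(x)$ forces $r(x)\le \min\{p(x),q(x)\}$ and makes $p(x)/r(x)$ and $q(x)/r(x)$ conjugate (their reciprocals sum to $1$). Classical Young's inequality applied to $a^{r(x)}$ and $b^{r(x)}$ then yields, for $a,b\ge 0$,
\[
(ab)^{r(x)} \;\le\; \frac{r(x)}{p(x)}\,a^{p(x)} + \frac{r(x)}{q(x)}\,b^{q(x)},
\]
with both coefficients pointwise in $[0,1]$.

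Next, assuming $\Vert f\Vert_{L^{p(\cdot)}}$ and $\Vert g\Vert_{L^{q(\cdot)}}$ are both positive (the vanishing cases being trivial), I would apply this pointwise with $a=|f(x)|/\Vert f\Vert_{L^{p(\cdot)}}$ and $b=|g(x)|/\Vert g\Vert_{L^{q(\cdot)}}$ and integrate against $\mu$. Because $r(x)/p(x)$ and $r(x)/q(x)$ are dominated by $1$, the right-hand side is at most the sum of the two modulars, each of which is bounded by $1$ by definition of the Luxemburg norm, so
\[
\int_A \left(\frac{|f(x)g(x)|}{\Vert f\Vert_{L^{p(\cdot)}}\Vert g\Vert_{L^{q(\cdot)}}}\right)^{r(x)} d\mu \;\le\; 2.
\]
In particular this already gives $fg\in L^{r(\cdot)}(A,\mu)$.

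The final step, which contains the only subtle point, is to convert this modular bound $\le 2$ into the Luxemburg-norm condition (a modular bound $\le 1$). I would set $\lambda \coloneqq 2^{1/r_-}\,\Vert f\Vert_{L^{p(\cdot)}}\Vert g\Vert_{L^{q(\cdot)}}$; since $r(x)/r_-\ge 1$ one has $\lambda^{-r(x)}\le \tfrac{1}{2}(\Vert f\Vert_{L^{p(\cdot)}}\Vert g\Vert_{L^{q(\cdot)}})^{-r(x)}$, so $\int_A(|fg|/\lambda)^{r(x)}d\mu \le 1$ and therefore $\Vert fg\Vert_{L^{r(\cdot)}}\le 2^{1/r_-}\Vert f\Vert_{L^{p(\cdot)}}\Vert g\Vert_{L^{q(\cdot)}}$, proving the claim with $C=2^{1/r_-}$. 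The boundedness hypothesis $r_->0$ is used precisely to guarantee that this inflation factor $2^{1/r_-}$ is finite; this modular-to-norm conversion is essentially the only issue that distinguishes this argument from the classical constant-exponent Holder inequality.
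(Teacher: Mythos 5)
The paper does not prove this statement at all: it is quoted as a known preliminary, with a citation to Lemma 3.2.20 of Diening--Harjulehto--H\"ast\"o--R\r{u}\v{z}i\v{c}ka, so there is no in-paper argument to compare against. Your proof is correct and is essentially the standard one from that reference: the conjugacy of $p(x)/r(x)$ and $q(x)/r(x)$, the pointwise Young inequality $(ab)^{r(x)}\le \tfrac{r(x)}{p(x)}a^{p(x)}+\tfrac{r(x)}{q(x)}b^{q(x)}$, the unit-ball property of the Luxemburg norm to bound the two modulars by $1$, and the modular-to-norm conversion yielding $C=2^{1/r_-}\le 2$. Two small points worth a sentence each in a written-up version: (i) Young's inequality needs $p(x)/r(x)>1$ strictly, which holds here precisely because $q_+<\infty$ forces $r(x)<p(x)$ a.e. (and symmetrically $r(x)<q(x)$); (ii) the bound $\int_A(|f|/\Vert f\Vert_{L^{p(\cdot)}})^{p(x)}d\mu\le 1$ is not literally ``by definition'' of the Luxemburg norm as stated in the paper (the infimum is over $\lambda$ with modular \emph{strictly} less than $1$), but follows by letting $\lambda\downarrow\Vert f\Vert_{L^{p(\cdot)}}$ and using monotone convergence, which is legitimate since $p_+<\infty$ makes the modular finite and continuous in $\lambda$. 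Neither is a gap; the argument is sound.
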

    
    Our second result is an embedding result.
    
    \begin{theorem}\label{theorm:embed1}
       Let $\Omega \subset \mathbb{R}^{n}$ be a smooth bounded domain, $s\in(0,1)$ and
       $p$ be a  bounded variable exponent such that $p_->1$. 
       If $t\in (0,s)$ and $r\in(1,p_-)$ then
       the space $ W^{s,p(\cdot,\cdot)}(\Omega)$ is continuously embedded in 
       $W^{t,r}(\Omega).$ In addition, there is a positive constant 
       $C=C(p_-,p_+,r,s,t,N,\Omega)$ such that
       \[
            \Vert f\Vert_{\scriptstyle L^r(\Omega)}\le
            C\Vert f\Vert_{\scriptstyle L^{\bar{p}(\cdot)}(\Omega)}\quad
            \text{ and }\quad  \lbrack f\rbrack_{t,r}\le C \lbrack f
            \rbrack_{s,p(\cdot,\cdot)}
       \]
    \end{theorem}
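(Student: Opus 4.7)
The embedding consists of two claims: a Lebesgue embedding $L^{\bar p(\cdot)}(\Omega)\hookrightarrow L^r(\Omega)$ and a Gagliardo seminorm embedding $[\,\cdot\,]_{s,p(\cdot,\cdot)}\gtrsim[\,\cdot\,]_{t,r}$. The plan is to handle both by variable-exponent Hölder (Theorem \ref{Holder}) together with the fact that $r<p_-$ gives a well-defined conjugate exponent, and that $t<s$ produces the integrability margin needed to absorb the non-local weight.

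For the first estimate, since $r<p_-\le\bar p(x)$ and $\Omega$ is bounded, I would write $|f|^r=|f|^r\cdot 1$ and apply Theorem \ref{Holder} with exponents $\bar p(\cdot)/r$ and $\bar p(\cdot)/(\bar p(\cdot)-r)$. Both are bounded away from $1$ and $\infty$, and the constant function $1$ lies in $L^{\bar p(\cdot)/(\bar p(\cdot)-r)}(\Omega)$ because $|\Omega|<\infty$. This yields $\|f\|_{L^r(\Omega)}\le C\|f\|_{L^{\bar p(\cdot)}(\Omega)}$ with $C$ depending on $p_-,p_+,r,|\Omega|$.

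For the seminorm estimate, set $\lambda\coloneqq[f]_{s,p(\cdot,\cdot)}$ and let
\[
    g(x,y)\coloneqq\frac{|f(x)-f(y)|}{\lambda\,|x-y|^{(n+sp(x,y))/p(x,y)}},
\]
so that $\int_{\Omega\times\Omega} g(x,y)^{p(x,y)}\,dx\,dy\le 1$, i.e.\ $\|g\|_{L^{p(\cdot,\cdot)}(\Omega\times\Omega)}\le1$. The Gagliardo integrand for $(t,r)$ can then be written as
\[
    \frac{|f(x)-f(y)|^{r}}{|x-y|^{n+tr}}
    =\lambda^{r}\,g(x,y)^{r}\,|x-y|^{r(s-t)-n(1-r/p(x,y))}.
\]
I would apply Theorem \ref{Holder} on $\Omega\times\Omega$ to the two factors $g^{r}$ and $h(x,y)\coloneqq|x-y|^{r(s-t)-n(1-r/p(x,y))}$ with the variable exponents $p(\cdot,\cdot)/r$ and its conjugate $p(\cdot,\cdot)/(p(\cdot,\cdot)-r)$, both of which are bounded away from $1$ and $\infty$ because $r<p_-$. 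The first factor satisfies $\|g^{r}\|_{L^{p(\cdot,\cdot)/r}}=\|g\|_{L^{p(\cdot,\cdot)}}^{r}\le 1$, so it only remains to show $\|h\|_{L^{p(\cdot,\cdot)/(p(\cdot,\cdot)-r)}(\Omega\times\Omega)}\le C$.

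The main (and only) obstacle is this integrability of the weight $h$. Pointwise the exponent of $|x-y|$ in $h(x,y)^{p(x,y)/(p(x,y)-r)}$ equals
\[
    \frac{\bigl(r(s-t)-n(1-r/p(x,y))\bigr)p(x,y)}{p(x,y)-r}
    =\frac{rp(x,y)(s-t)}{p(x,y)-r}-n,
\]
so the singularity near the diagonal is controlled precisely by the positive margin $rp(x,y)(s-t)/(p(x,y)-r)>0$ coming from $t<s$ and $r<p_-$. A uniform (in $(x,y)$) upper bound on this exponent provides a uniform singularity of the form $|x-y|^{-n+\delta}$ with $\delta>0$, which is integrable on $\Omega\times\Omega$ (bounded), giving the required constant. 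Combining the three bounds yields $[f]_{t,r}^{r}\le C\lambda^{r}$, hence $[f]_{t,r}\le C[f]_{s,p(\cdot,\cdot)}$, completing the proof.
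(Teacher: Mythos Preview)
Your argument is correct and follows essentially the same route as the paper: both the Lebesgue and the seminorm estimates are obtained by the variable-exponent H\"older inequality with exponents $p(\cdot,\cdot)$ and $p(\cdot,\cdot)r/(p(\cdot,\cdot)-r)$, reducing the seminorm bound to the finiteness of $\int_{\Omega}\int_{\Omega}|x-y|^{\,rp(x,y)(s-t)/(p(x,y)-r)-n}\,dx\,dy$, which holds because $t<s$ and $\Omega$ is bounded. One minor wording slip: to dominate the diagonal singularity by $|x-y|^{-n+\delta}$ you need a uniform \emph{lower} bound $\delta>0$ on $rp(x,y)(s-t)/(p(x,y)-r)$ (attained at $p=p_+$), not an upper bound; away from the diagonal the boundedness of $\Omega$ and of the exponent handle the rest.
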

    
    \begin{proof}
	    Given $f\in  W^{s,p(\cdot,\cdot)}(\Omega)$ we want to show that 
	    $f\in  W^{t,r}(\Omega).$
	    Since $r\in(1,p_-),$ $f\in L^{\bar{p}(\cdot)}(\Omega)$ and $\Omega$
	    has finite measure,  by Holder's inequality, we have that $f\in L^r(\Omega).$
	    Then, we only need to show that 
	    \[
	        F_r(x,y)=
	        \dfrac{|f(x)-f(y)|}{|x-y|^{\nicefrac Nr + t}}\in L^r(\Omega\times\Omega).
	    \]
	    Observe that
	    \[ 
	        F_r(x,y)=\dfrac{|f(x)-f(y)|}{|x-y|^{\nicefrac{N}{p(x,y)} + s}}
	        \dfrac{|x-y|^{s-t}}{|x-y|^{N\nicefrac{(p(x,y)-r)}{rp(x,y)}}}
	        :=H(x,y)G(x,y).
	    \]
	    Since $f\in  W^{s,p(\cdot,\cdot)}(\Omega),$ we have that $H\in 
	    L^{p(\cdot,\cdot)}(\Omega\times\Omega).$ Then, by the Holder's inequality
	    we only need to show that $G\in 
	    L^{q(\cdot,\cdot)}(\Omega\times\Omega),$ where 
	    $$q(x,y)=\frac{p(x,y)r}{(p(x,y)-r)}.$$
	    That is, it is enough to show that
	    \begin{equation}\label{eq:bounded}
	         \int_\Omega\int_\Omega\dfrac{|x-y|^{(s-t)
	        \nicefrac{rp(x,y)}{(p(x,y)-r)}}}{|x-y|^N}
	         dx dy<+\infty.
        \end{equation}
	    To prove this, we set $d=\sup\{|x-y|\colon (x,y)\in \Omega\times\Omega\}.$ 
	    Observe that 
	    \begin{align*}
	        d^{(s-t)
	        \nicefrac{rp(x,y)}{(p(x,y)-r)}}&\le \max\{
	        d^{(s-t)
	        \nicefrac{rp_-}{(p_- -r)}},d^{(s-t)
	        \nicefrac{rp_+}{(p_+ -r)}}\},\\
	        \left(\dfrac{|x-y|}{d}\right)^{(s-t)
	        \nicefrac{rp(x,y)}{(p(x,y)-r)}}&\le\left(\dfrac{|x-y|}{d}\right)^{(s-t)
	        \nicefrac{rp_+}{(p_+ -r)}}.
        \end{align*} 
        Then, there is a positive constant $C=C(p_+,p_-,r, s, t,d)$ such that
        \[
            \int_\Omega\int_\Omega\dfrac{|x-y|^{(s-t)
	        \nicefrac{rp(x,y)}{(p(x,y)-r)}}}{|x-y|^N}
	         dx dy\le C \int_\Omega\int_\Omega\dfrac{|x-y|^{(s-t)
	        \nicefrac{rp_+}{(p_+-r)}}}{|x-y|^N}.
	         dx dy
        \] 
        Therefore, since $(s-t)\nicefrac{rp_+}{(p_+-r)}>0$ and $\Omega$ is bounded, 
        we have that \eqref{eq:bounded} holds.
    \end{proof}

    Finally, we recall that in the constant exponent case we have the following 
    fractional Sobolev trace embedding theorem. For the proof we refer to \cite{Gris}.

    \begin{theorem}
        \label{2}
        Let $\Omega \subset \mathbb{R}^{n}$ be an smooth bounded domain,  
        $0<s<1$ and $p\in \lbrack 1,+\infty )$ such that $1<sp<n$.
        Then there exists a positive constant $C=C(n,p,q,s,\Omega )$ such that, for
        any $f\in W^{s,p}(\Omega )$, we have 
        \begin{equation*}
            \Vert f\Vert_{L^{q}(\partial \Omega )}\leq C\Vert f\Vert_{W^{s,p}(\Omega )}
        \end{equation*}
        for any $q$ such that
        \begin{equation*}
            1\leq q \leq \frac{(n-1)p}{n - sp};
          \end{equation*}
        i.e., the space $W^{s,p}(\Omega )$ is
        continuously embedded in $L^{q}(\partial \Omega )$.
        Moreover, this embedding is compact for $q\in [1,\frac{(n-1)p}{n - sp}).$
    \end{theorem}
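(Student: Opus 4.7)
The plan is to reduce the statement to the classical trace embedding on the half-space and then apply a standard fractional Sobolev embedding on the boundary. First I would cover $\partial\Omega$ by finitely many coordinate charts $U_1,\dots,U_k$ with a subordinated smooth partition of unity $\{\eta_j\}$. Since $\Omega$ is smooth, each $U_j$ carries a $C^{\infty}$ diffeomorphism $\Phi_j$ that flattens the boundary, sending $U_j\cap\Omega$ into the half-space $\mathbb{R}^n_+=\{(z,t)\colon t>0\}$ and $U_j\cap\partial\Omega$ into $\mathbb{R}^{n-1}\times\{0\}$. Under such a diffeomorphism the Gagliardo seminorm on $\Omega$ and the boundary Lebesgue norm are equivalent to their Euclidean counterparts, with constants depending only on $\Phi_j$. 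Writing $f=\sum_j (\eta_j f)$ thus reduces matters to proving the estimate for compactly supported functions $f\in W^{s,p}(\mathbb{R}^n_+)$ with trace on $\mathbb{R}^{n-1}$.

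Second, I would extend $f$ to all of $\mathbb{R}^n$ via a bounded linear extension operator $E\colon W^{s,p}(\mathbb{R}^n_+)\to W^{s,p}(\mathbb{R}^n)$ (for $0<s<1$ the even reflection across $\{t=0\}$ suffices), and establish the Gagliardo trace inequality
\[
   \|f(\cdot,0)\|_{W^{s-1/p,p}(\mathbb{R}^{n-1})}\leq C\,\|f\|_{W^{s,p}(\mathbb{R}^n)}.
\]
The hypothesis $sp>1$ ensures $s-1/p>0$, so the target Besov space is nontrivial. The inequality is proved by expressing $f(z,0)-f(z',0)$ as a telescoping sum of averages of $f$ over balls in the upper half-space situated at heights comparable to $|z-z'|$, bounding each term by H\"older's inequality, and integrating in the scale variable so as to recover the weight $|x-y|^{-(n+sp)}$ appearing in the Gagliardo seminorm.

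Third, once $\operatorname{Tr} f\in W^{s-1/p,p}(\partial\Omega)$, set $t\coloneqq s-1/p$; then $0<t<1$ and $tp<n-1$. The classical fractional Sobolev embedding on the $(n-1)$-dimensional manifold $\partial\Omega$ gives the continuous inclusion $W^{t,p}(\partial\Omega)\hookrightarrow L^q(\partial\Omega)$ whenever $1\leq q\leq \frac{(n-1)p}{(n-1)-tp}$, and a direct computation shows
\[
   \frac{(n-1)p}{(n-1)-tp}=\frac{(n-1)p}{n-sp}.
\]
Compactness for strictly subcritical $q$ then follows from the compact version of this embedding, itself a consequence of the Riesz--Fr\'echet--Kolmogorov criterion applied to $W^{t,p}(\partial\Omega)$.

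The principal obstacle is the half-space trace inequality of the second step. The difficulty is that the Gagliardo seminorm on $\mathbb{R}^n_+$ weights pairs $(x,y)$ isotropically by $|x-y|^{-(n+sp)}$, while bounding the purely tangential difference $|f(z,0)-f(z',0)|$ requires transferring information from the $n$-dimensional interior to the $(n-1)$-dimensional boundary. One must carefully balance tangential and normal averaging scales, and apply H\"older's inequality with the right exponents, in order to close the estimate with a loss of exactly $1/p$ derivatives. Once this step is secured, the localization and the fractional Sobolev embedding on $\partial\Omega$ complete the proof.
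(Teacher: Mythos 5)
The paper does not actually prove this statement: it is quoted as a known constant-exponent result and attributed to Grisvard's monograph \cite{Gris}, so there is no internal proof to compare yours against. Your outline is the standard proof of that classical theorem, and its architecture is sound: localization and boundary flattening preserve the Gagliardo seminorm up to constants (multiplication by a Lipschitz cutoff is harmless because $s<1$ makes the commutator kernel integrable); even reflection extends $W^{s,p}(\mathbb{R}^n_+)$ to $W^{s,p}(\mathbb{R}^n)$ for $0<s<1$; the half-space trace inequality lands in $W^{s-1/p,p}(\mathbb{R}^{n-1})$, which requires exactly $sp>1$; and with $t=s-1/p$ one has $tp=sp-1<n-1$, so the fractional Sobolev embedding on the $(n-1)$-dimensional compact manifold $\partial\Omega$ applies, and indeed $\tfrac{(n-1)p}{(n-1)-tp}=\tfrac{(n-1)p}{n-sp}$, with compactness below the critical exponent by Riesz--Fr\'echet--Kolmogorov. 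The one caveat is that the entire analytic content sits in your second step, which you describe (telescoping dyadic averages over interior balls at heights comparable to the tangential separation, followed by H\"older and integration in the scale variable) but do not execute; that description is the correct mechanism and is precisely how the standard references carry it out, but as written your text is a plan for that estimate rather than a proof of it. Since the paper itself delegates the whole theorem to the literature, your proposal is, if anything, more self-contained than the source.
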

    
    \begin{remark}{\rm
	   Let $\Omega \subset \mathbb{R}^{n}$ be a smooth bounded domain, $0<s<1$ and
       $p$ be a bounded variable exponent such that $n>sp_->1$. 
       Then there exist $t\in(0,s)$ and $r\in(1,p_-)$ such that $tr\in(1,n).$
       Therefore, by Theorems \ref{theorm:embed1}
       and \ref{2}, we have that  $W^{s,p(\cdot,\cdot)}(\Omega)$ is continuously embedded in
       $L^q(\partial\Omega)$ for all $q\in [1,\frac{(n-1)r}{n - tr}].$ That is, 
       for any $u\in W^{s,p(\cdot,\cdot)}(\Omega),$ $u|_{\partial\Omega}$
       is well defined.}
    \end{remark}
    
\section{The trace theorem}
\label{sect-Sob-exp}

Let us proceed with the proof of Theorem \ref{sobolev.trace}.

\begin{proof}[Proof of Theorem \ref{sobolev.trace}]
    Being $p$ and $q$ continuous, and $\partial \Omega$ compact, 
    from our assumption \eqref{cota.q} we get that there exists a
    positive constant $k$ such that 
    \begin{equation}\label{1.1}
        p^\star(x)-q(x)=\frac{(n-1)p(x,x)}{n-sp(x,x)}-q(x)\geq k>0,	
    \end{equation}
    for every $x\in\partial \Omega$ (here $p^\star$ is understood as $+\infty$ when
    $n-sp(x,x) \leq 0$).

    Since $p$ and $q$ are continuous, 
    using  \eqref{1.1} we can find a constant 
    $\epsilon =\epsilon(p,q,k,s)$ and a finite family of open sets 
    $B_{i} \subset \overline{\Omega}$ such that 
    \begin{equation*}
        \partial \Omega =\bigcup _{i=1}^{N}\overline{B_{i}} \cap \partial \Omega,
         \qquad diam(B_{i})\coloneqq\sup\{|x-y|\colon (x,y)\in B_i\times B_i\}<\epsilon ,
    \end{equation*}
    and 
    \begin{equation}
        \displaystyle\frac{(n-1) p(z,y)}{n-sp(z,y)}-q(x)\geq \frac{k}{2}
         \label{1.2}
    \end{equation}
    for every $x\in \partial \Omega \cap \overline{B_{i}}$ and $(z,y)\in \overline{B_{i}}
    \times \overline{B_{i}}$ (here we set again $\frac{(n-1) p(z,y)}{n-sp(z,y)}$ as $+\infty$
     when $n-sp(z,y) \leq 0$).

    Given $\delta>0$ small we can select
    \begin{equation*}
        p_{i} < \inf\{p(z,y)-\delta \colon (z,y)\in B_{i}\times B_{i}\}\qquad 1<s_i p_i<n,
    \end{equation*}
    such that
    \begin{equation}
        \displaystyle\frac{\displaystyle (n-1)p_{i}}{n-s_i\displaystyle p_{i}}\displaystyle
        \geq \frac{k}{3}+q(x)  \label{1.3}
    \end{equation}
    for each $x\in \overline{B_{i}} \cap \partial \Omega$. 
    We can choose $\delta$ smaller is necessary in order to have $$p_{i}-1>\delta >0.$$

    Hence, by Theorem \ref{theorm:embed1} and the trace theorem for constant exponents 
    (see Theorem \ref{2}), we obtain the existence of a constant 
    $C=C(n, p_{i}, s_i, \epsilon, B_i)$ such that 
    \begin{align}  \label{1.3.0}
        \Vert f\Vert_{L^{p^{*}_{i}}(\overline{B_{i}}\cap \partial \Omega)}
        \leq C \Big(\Vert f\Vert_{L^{p_{i}}(B_{i})}+[f]_{t,p_{i}}(B_{i}) \Big).
    \end{align}

    Now we want to show that the following three statements hold.

    \begin{itemize}
        \item[(A)] There exists a constant $c_{1}$ such that 
            \begin{align*}
                \sum_{i=0}^{N} 
                \Vert f\Vert_{\scriptstyle L^{p^{*}_{i}}(\overline{B_{i}}\cap 
                \partial \Omega )}
                \geq c_{1} \Vert f\Vert_{\scriptstyle L^{q(\cdot)}(\partial \Omega)}.
            \end{align*}

        \item[(B)] There exists a constant $c_{2}$ such that 
            \begin{align*}
                \sum_{i=0}^{N} \Vert f\Vert_{\scriptstyle L^{p_{i}}(B_{i})} 
                \leq c_{2} \Vert f\Vert_{\scriptstyle L^{\bar{p}(\cdot)}(\Omega)}.
            \end{align*}

        \item[(C)] There exists a constant $c_{3}$ such that 
            \begin{align*}
                    \sum_{i=0}^{N}\lbrack f\rbrack_{s_i,p_{i}}(B_{i}) 
                    \leq c_{3}\lbrack f\rbrack _{s,p(\cdot,\cdot)}(\Omega).
             \end{align*}
    \end{itemize}

    These three inequalities and \eqref{1.3.0} give
        \begin{align*}
	        \Vert f\Vert _{\scriptstyle L^{q(\cdot)}(\partial \Omega )}
	        &\leq C\sum_{i=0}^{N}\Vert f\Vert _{L^{p_{i}^{\ast }}(\overline{B_{i}} 
	        \cap \partial \Omega)} \\
	        &\leq C\sum_{i=0}^{N}\left (\Vert f
	        \Vert_{\scriptstyle L^{p_{i}}(B_{i})}+[f]_{s_i,p_{i}}(B_{i})\right) \\
	        &\leq C\left(\Vert f\Vert _{\scriptstyle L^{\bar{p}(\cdot)}(\Omega)}
	        +[f]_{s,p(\cdot,\cdot)}(\Omega )\right) \\
	        &=C\Vert f\Vert _{s,p(\cdot,\cdot)},
        \end{align*}
      as we wanted to show.

Therefore, we have to show $(A)$, $(B)$ and $(C)$.
Let us start with $(A)$. For $x\in \partial \Omega$ we have 
\begin{equation*}
|f(x)|\leq \sum_{i=0}^{N}|f(x)|\chi_{\overline{B_{i}}} (x).
\end{equation*}
Hence 
\begin{equation}
\Vert f\Vert_{\scriptstyle L^{q(\cdot)}(\partial \Omega )}\leq \sum_{i=0}^{N}\Vert 
f\Vert_{\scriptstyle L^{q(\cdot)}(\overline{B_{i}}\cap \partial \Omega)},
\label{besi}
\end{equation}
Since we have
$$
 \frac{\displaystyle (n-1)p_{i}}{n-t\displaystyle p_{i}}\displaystyle
\geq \frac{k_{1}}{3}+q(x) > q(x) 
$$
we can take $a_{i}(x)$ such that 
\begin{equation*}
\frac{1}{q(x)}= \frac{ n-t\displaystyle p_{i}  }{\displaystyle (n-1)p_{i}}+\frac{1}{a_i(x)}.
\end{equation*}
Using Theorem \ref{Holder} we obtain 
\begin{align*}
\Vert f\Vert _{L^{q(x)}(\overline{B_{i}}\cap \partial \Omega)}& \leq c\Vert f\Vert_{L^{\frac{ (n-1)p_{i}}{n-t p_{i}}}(\overline{B_{i}}\cap \partial \Omega)}\Vert 1\Vert _{L^{a_{i}(x)}(\overline{B_{i}}\cap \partial \Omega)} \\
& \leq C\Vert f\Vert _{L^{\frac{ (n-1)p_{i}}{n-t p_{i}}}(\overline{B_{i}}\cap \partial \Omega)}.
\end{align*}
Thus, we get $(A)$.

To show $(B)$ we argue in a similar way using that $p(x,x)>p_{i}$ for $x\in
B_{i}$.

In order to prove $(C)$ let us set 
    \begin{equation*}
        F(x,y)\coloneqq\frac{|f(x)-f(y)|}{|x-y|^{s}},
    \end{equation*}%
and observe that 
\begin{align}
    \lbrack f]_{t,p_{i}}(B_{i})& =\left( \int_{B_{i}}\int_{B_{i}}\frac{%
    |f(x)-f(y)|^{p_{i}}}{|x-y|^{n+tp_{i}+sp_{i}-sp_{i}}}\,dxdy\right) ^{\frac{1}{%
    p_{i}}}  \notag \\
    & =\left( \int_{B_{i}}\int_{B_{i}}\left( \frac{|f(x)-f(y)|}{|x-y|^{s}}%
    \right)^{p_{i}}\,\frac{dxdy}{|x-y|^{n+(t-s)p_{i}}}\right) ^{\frac{1}{p_{i}}}
    \notag \\
    & =\Vert F\Vert_{\scriptstyle  L^{p_{i}}(\mu ,B_{i}\times B_{i}})  \label{1.3.1} \\
    & \leq C \Vert F\Vert_{\scriptstyle L^{p(\cdot,\cdot)}(\mu ,B_{i}
    \times B_{i})}\Vert 1\Vert_{\scriptstyle  L^{b_{i}(\cdot,\cdot)}
    (\mu,B_{i}\times B_{i})}  \notag \\
    & \leq C\Vert F\Vert_{\scriptstyle  L^{p(\cdot,\cdot)}(B_{i}\times B_{i},\mu)},  \notag
\end{align}
where we have used Theorem \ref{Holder} with 
\begin{equation*}
\frac{1}{p_{i}}=\frac{1}{p(x,y)}+\frac{1}{b_{i}(x,y)},
\end{equation*}
but considering the measure in $B_{i}\times B_{i}$ given by 
\begin{equation*}
d\mu (x,y)=\frac{dxdy}{|x-y|^{n+(t-s)p_{i}}}.
\end{equation*}

Now our aim is to show that 
\begin{equation}
\Vert F\Vert _{\scriptstyle L^{p(\cdot,\cdot)}(B_{i}\times B_{i},\mu)}\leq 
C[f]_{s,p(\cdot,\cdot)}(B_{i})
\label{seis}
\end{equation}
for every $i$. If this holds, then we immediately get $(C)$ using that for every $i$ it holds that
$$ [f]_{s,p(\cdot,\cdot)}(B_{i})
 \leq [f]_{s,p(\cdot,\cdot)}(\Omega).$$

    Set $\lambda=\lbrack f]_{s,p(\cdot,\cdot)}(B_i ).$
    Then, using that $diam (B_i) <\epsilon <1,$  we get $|x-y|<1$ for every 
    $(x,y) \in B_{i} \times B_{i}$ and hence
    \begin{align*}
        & \int_{B_{i}}\int_{B_{i}}\left( \frac{|f(x)-f(y)|}{\lambda
        |x-y|^{s}}\right) ^{p(x,y)}\frac{dxdy}{|x-y|^{n+(t-s)p_{i}}} \\
        & \qquad =\int_{B_{i}}\int_{B_{i}} |x-y|^{(s-t)p_{i}}\frac{
        |f(x)-f(y)|^{p(x,y)}}{\lambda ^{p(x,y)}|x-y|^{n+sp(x,y)}}\,dxdy \\
        & \qquad < \int_{B_{i}}\int_{B_{i}}
        \frac{|f(x)-f(y)|^{p(x,y)}}{\lambda^{p(x,y)}|x-y|^{n+sp(x,y)}}\,dxdy\le1.
    \end{align*}
    Therefore 
    \[
        \Vert F\Vert _{L^{p(\cdot,\cdot)}(B_{i}\times B_{i},\mu)}\leq 
        \lambda=[f]_{s,p(\cdot,\cdot)}(B_{i}),
    \]
    which implies the desired inequality.

    Finally, we recall that the previous embedding is compact since in the
    constant exponent case we have that for subcritical exponents the embedding
    is compact. Hence, for a bounded sequence in $W^{s,p(\cdot,\cdot)}(\Omega)$, 
    $f_i$, we can mimic the
    previous proof obtaining that for each $B_i$ we can extract a convergent
    subsequence in $L^{q(\cdot)} (B_i\cap \partial \Omega)$.
\end{proof}

\begin{remark}
{\rm Our result is sharp in the following sense: if 
\begin{equation*}
\frac{(n-1) p(x_{0},x_{0})}{n-sp(x_{0},x_{0})}<q(x_{0})
\end{equation*}
for some $x_{0}\in \partial \Omega $, then the embedding of $W^{s,p(\cdot,\cdot)}(\Omega)$ in $L^{q(\cdot)}(\partial\Omega )$
cannot hold. In fact, from our continuity conditions on $p$
and $q$ there is a small ball $B_{\delta }(x_{0})$ such that 
\begin{equation*}
\max_{\overline{B}_{\delta }(x_{0})\times \overline{B}_{\delta }(x_{0})}
\frac{(n-1) p(x,y)}{n-sp(x,y)}<\min_{\overline{B}_{\delta }(x_{0}) \cap \partial \Omega}q(x).
\end{equation*}
In this situation, with the same arguments that hold for the constant
exponent case, one can find a sequence $f_{k}$ supported inside $B_{\delta
}(x_{0})$ such that $\Vert f_{k}\Vert _{s,p(\cdot,\cdot)}\leq C$ and $\Vert f_{k}\Vert
_{L^{q(\cdot)}(\overline{B_{\delta }(x_{0})} \cap \partial \Omega}\rightarrow +\infty $. In fact, we just consider a smooth,
compactly supported function $g$ and take $$f_k(x) = k^a g(k(x-x_0))$$ with $a$ such that
$ a p(y,z) -n + sp(y,z) \leq 0 $ and $a q(x) -(n-1) >0$ for $x\in \overline{B}_{\delta }(x_{0}) \cap \partial \Omega$ and $y,z \in \overline{B}_{\delta }(x_{0})$.}

{\rm Finally, we mention that the critical case 
\begin{equation*}
\frac{(n-1) p(x,x)}{n-sp(x,x)}\geq q(x)
\end{equation*}
with equality for some $x_{0}\in \partial\Omega $ is left open. }
\end{remark}

\begin{remark} \label{rem.ttt}
{\rm We observe that with the same arguments we can also deal with
variable $s$ in the fractional Sobolev space. That is, given 
$$
s:\overline{\Omega }\times \overline{\Omega }\rightarrow
(0,1)
$$
a symmetric and continuous function and $p$ as before,
we can consider
the seminorm
\begin{equation*}
\lbrack f]_{s(\cdot,\cdot),p(\cdot,\cdot)}(\Omega )\coloneqq\inf \left\{ \lambda >0\colon\int_{\Omega
}\int_{\Omega }\frac{|f(x)-f(y)|^{p(x,y)}}{\lambda ^{p(x,y)}|x-y|^{n+s(x,y)p(x,y)}
}<1\right\}
\end{equation*}
and, as before, the norm 
\begin{equation*}
\Vert f\Vert _{s(\cdot,\cdot), p(\cdot,\cdot)}\coloneqq
\Vert f\Vert _{L^{\bar{p}(\cdot}(\Omega )}+[f]_{s(\cdot,\cdot),p(\cdot,\cdot)}(\Omega ).
\end{equation*}
In this case, we have that
\begin{equation} \label{cota.q.44}
\frac{(n-1)p(x,x)}{n-s(x,x)p(x,x)}>q(x),
\end{equation}
for $x\in \partial \Omega \cap \{ n-s(x,x)p(x,x) >0\}$ implies the existence of a constant 
$C$ such that 
\begin{equation*}
\Vert f\Vert _{L^{q(\cdot)}(\partial \Omega )}\leq 
C\Vert f\Vert _{W^{s(\cdot,\cdot), p(\cdot,\cdot)} (\Omega)}.
\end{equation*}
}
\end{remark}

\begin{remark} {\rm We also have a Sobolev-Sobolev trace embedding. 
Using that for constant $p$ one has the embedding
$$
W^{s,p} (B) \rightarrow W^{s-\frac1p ,p} (\partial B)
$$
(see \cite{Gris}) one can show (arguing exactly as before) the following result: let  
$$
s\colon\overline{\Omega }\times \overline{\Omega }\rightarrow
(0,1), \qquad p\colon\overline{\Omega }\times \overline{\Omega }\rightarrow
(1,\infty)
$$
and 
$$
t\colon\partial \Omega \times \partial \Omega \rightarrow
(0,1), \qquad q\colon\partial \Omega \times \partial \Omega \rightarrow
(1,\infty)
$$
be continuous functions with 
$$
\bar{p}(x)\bar{s}(x)\coloneqq p(x,x) s(x,x) >1
$$
and
$$
\bar{t}(x)\coloneqq t(x,x) < \bar{s}(x)-\frac{1}{\bar{p}(x) } \qquad 
\mbox{ and } \qquad \bar{q}(x)\coloneqq q(x,x) < \bar{p}(x)
$$
for every $x\in \partial \Omega$.
Then it holds that
$$
W^{s(\cdot,\cdot),p(\cdot,\cdot)} (\Omega) \rightarrow 
W^{t(\cdot,\cdot),q(\cdot,\cdot)} (\partial \Omega).
$$
Notice that here we let (as the notation suggests)
\begin{equation*}
\lbrack f]_{t(\cdot,\cdot),q(\cdot,\cdot)}(\partial \Omega )
\coloneqq\inf \left\{ \lambda >0\colon\int_{\partial\Omega
}\int_{\partial\Omega }\frac{|f(x)-f(y)|^{q(x,y)}}{\lambda^{q(x,y)}|x-y|^{n+t(x,y)q(x,y)}
}d\sigma d\sigma<1\right\}
\end{equation*}
and the norm 
\begin{equation*}
\Vert f\Vert _{W^{t(\cdot,\cdot), q(\cdot,\cdot)}(\partial\Omega)}\coloneqq
\Vert f\Vert _{L^{\bar{q}(\cdot)}(\partial \Omega )}
+[f]_{t(\cdot,\cdot),q(\cdot,\cdot)}(\partial \Omega ).
\end{equation*}

In fact, to prove this result, one first observe that the trace theorem in its Sobolev-Lebesgue version 
gives that
\begin{equation*}
\Vert f\Vert _{L^{\bar{q}(\cdot)}(\partial \Omega )}\leq C\Vert f\Vert_{{s(\cdot,\cdot), 
p(\cdot,\cdot)}},
\end{equation*}
see Remark \ref{rem.ttt} (notice that we have $\bar{q}(x) < \bar{p}(x) < 
\frac{(n-1)\bar{p}(x)}{n-\bar{s}(x)\bar{p}(x)}$ for $x\in \partial 
\Omega$ due to the fact that we assumed $\bar{p}(x)\bar{s}(x) >1$). Hence we are left with the proof of an inequality of the form 
$$
[f]_{t(\cdot,\cdot),q(\cdot,\cdot)}(\partial \Omega ) 
\leq C\Vert f\Vert _{s(\cdot,\cdot), p(\cdot,\cdot)}.
$$
Here one can mimic the same proof as in the Sobolev-Lebesgue trace theorem using that there exist
a finite number of
sets $B_i$ such that $\cup_i^N \overline{B_i}$ cover $\partial \Omega$ 
and constant exponents $s_i,t_i$, $p_i,q_i$ such that
$$
p(x,y) >p_i, \qquad  s_i > s(x,y), \qquad \forall x,y \in \overline{B_i} \times \overline{B_i}
$$
$$
q(x,y) < q_i \qquad
t_i <t(x,y),  \qquad \forall x,y \in \overline{B_i}\cap \partial \Omega \times \overline{B_i}
\cap \partial \Omega
$$
with 
$$
p_i s_i >1, \qquad
t_i < s_i -\frac{1}{p_i} \qquad \mbox{ and } \qquad q_i < p_i
$$
and then use the Sobolev-Sobolev trace theorem with constant exponents
$$
W^{s_i,p_i} (B) \rightarrow W^{t_i ,q_i} (\partial B),
$$
add over $i$ and conclude as before.
}
\end{remark}

\section{An application}\label{AnAp}

Now we turn our attention to the proof of Theorem \ref{teo.aplic}.

\begin{proof}[Proof of Theorem \ref{teo.aplic}]
We just observe that we can apply the direct method of calculus of
variations. Note that the functional $G$ is
strictly convex (this holds since for any $x$ and $y$ the
function $t\mapsto t^{p(x,y)}$ is strictly convex) and weakly lower semicontinuous.

From our previous results, $W^{s,p(\cdot,\cdot)}(\Omega)$ is compactly
embedded in $L^{q(\cdot)}(\partial \Omega)$ for $q(x)<p^{*}(x)$, see Theorem \ref{sobolev.trace}. 
In particular, we have that $W^{s,p(\cdot,\cdot)}(\Omega)$ is compactly
embedded in $L^{\frac{r(\cdot)}{r(\cdot)-1}}(\partial\Omega)$.

Let us see that $G$ is coercive. We have 
\begin{align*}
G(u)& =\int_{\Omega }\int_{\Omega }\frac{|u(x)-u(y)|^{p(x,y)}}{
|x-y|^{n+sp(x,y)}p(x,y)}dxdy+\int_{\Omega }\frac{|u(x)|^{p(x,x)}}{p(x,x)}
dx-\int_{\partial\Omega }g(x)u(x)\,d\sigma \\
& \geq \int_{\Omega }\int_{\Omega }\frac{|u(x)-u(y)|^{p(x,y)}}{
|x-y|^{n+sp(x,y)}p(x,y)}dxdy+\int_{\Omega }\frac{|u(x)|^{p(x,x)}}{p(x,x)}dx \\
& \qquad -\Vert g\Vert _{L^{r(x)}(\partial \Omega )}\Vert u\Vert _{L^{\frac{r(x)}{
r(x)-1}}(\partial \Omega )} \\
& \geq \int_{\Omega }\int_{\Omega }\frac{|u(x)-u(y)|^{p(x,y)}}{
|x-y|^{n+sp(x,y)}p(x,y)}dxdy+\int_{\Omega }\frac{|u(x)|^{p(x,x)}}{p(x,x)}
dx-C\Vert u\Vert_{s,p(\cdot,\cdot)}.
\end{align*}

Now, we choose a sequence $u_{j}$ such that 
$\Vert u_{j}\Vert_{s, p(\cdot,\cdot)}\rightarrow \infty $ as $j\rightarrow \infty $. 
Let us assume that $\Vert u_j\Vert_{s, p(\cdot,\cdot)}>1$. Then we have 
\begin{align*}
\frac{G(u_j)}{\Vert u_j\Vert_{s, p(\cdot,\cdot)}}& \geq \frac{1}{\Vert u_j\Vert_{s, p(\cdot,\cdot)}}
\left( \int_{\Omega }\int_{\Omega }\frac{|u_j(x)-u_j(y)|^{p(x,y)}}{
|x-y|^{n+sp(x,y)}p(x,y)}dxdy+\int_{\Omega }\frac{|u_j(x)|^{p(x,x)}}{p(x,x)}
dx\right) -C \\
& \geq \Vert u_j\Vert_{s, p(\cdot,\cdot)}^{p_{-}-1}-C.
\end{align*}
Then we obtain 
\begin{equation*}
G(u_{j})\geq \Vert u_{j}\Vert_{s, p(\cdot,\cdot)}^{p_{-}-1}-C\Vert
u_{j}\Vert_{s, p(\cdot,\cdot)}\rightarrow \infty ,
\end{equation*}
and we conclude that $G$ is coercive. Therefore, there is a unique
minimizer of $G$ in $W^{s,p(\cdot,\cdot)}(\Omega)$.

To show that it holds that the minimizer verifies 
$$
\begin{array}{l}
\displaystyle
 \int_{\Omega
}\int_{\Omega }\frac{|u(x)-u(y)|^{p(x,y)-2} (u(x)-u(y)) (\varphi (x) -\varphi(y))}{|x-y|^{n+sp(x,y)}} 
\,  dx \, dy  \\[10pt]
\qquad \displaystyle + \int_{\Omega } |u (x)|^{p(x,x)-2} u(x) \varphi (x) \, dx -  \int_{\partial \Omega } g (x)\varphi (x)
\, d\sigma =0 
\end{array}
$$
for every $\varphi \in C^{1}$ one just have to differentiate
$$
t \mapsto G(u+tv)
$$
and use that this derivative vanishes at $t=0$ since $u$ is a minimum of $G$.
\end{proof}

\end{document}